\newcommand{\defeq}{\stackrel{\textnormal{def}}{=}}
\theoremstyle{plain}
\newtheorem{theorem}{{Theorem}}
\theoremstyle{definition}
\theoremstyle{remark}
\def\Q {{\mathbb Q}}
\def\cM {{\mathcal M}}
\def\cP {{\mathcal P}}
\def\cY {{\mathcal Y}}
\newcommand{\Ht}{\textnormal{Ht}}
\newcommand{\Val}{\textnormal{Val}}
\author[1]{Jun-Yong Park}
\affil[1]{University of Sydney\\
\texttt{June.Park@sydney.edu.au}}
\author[2]{Tristan Phillips}
\affil[2]{Dartmouth College\\
\texttt{TristanPhillips72@gmail.com}}
\title{$100\%$ of elliptic curves with a marked point have positive rank}
\date{}
\begin{document}

\maketitle


\begin{abstract}
  As a consequence of their work on average Selmer ranks of elliptic curves with  marked points \cite[\S 10]{BH22+}, Bhargava and Ho prove that $100\%$ of elliptic curves over $\Q$ with an additional marked point have positive rank. In this note we provide an alternate proof which extends the result to global fields of characteristic not two or three.
\end{abstract}  

Let $\cM_{1,2}$ denote the moduli space of genus one curves with two marked points. Let $K$ be a global field of characteristic not equal to $2$ or $3$. Over $K$ the space $\cM_{1,2}$ can be identified with an open substack of the weighted projective stack $\cP(2,3,4)$ (see, e.g., \cite[\S 2]{Inc22}). 
We define a height function on $\cM_{1,2}$ by defining a height on $\cP(2,3,4)$. 

Let $\Val(K)$ denote the set of places of $K$, and let $\Val_0(K)$ (resp. $\Val_\infty(K)$) denote the set of finite places (resp. infinite places) of $K$. For each finite place $v\in \Val_0(K)$ let $\pi_v$ be a uniformizer.
For $x=(x_0,x_1,x_2)\in K^{3}- \{(0,0,0)\}$ define
\begin{equation}
 |x|_{{(2,3,4)},v}\defeq
 \begin{cases}
 \max\left\{|\pi_v|_v^{\lfloor v(x_0)/2\rfloor},|\pi_v|_v^{\lfloor v(x_1)/3\rfloor},|\pi_v|_v^{\lfloor v(x_2)/4\rfloor}\right\} & \text{ if } v\in \Val_0(K),\\
 \max\left\{|x_0|_v^{1/2},|x_1|_v^{1/3},|x_2|_v^{1/4}\right\} & \text{ if } v\in \Val_\infty(K).
 \end{cases}
 \end{equation}
 Then the \textit{height} of a point $x=[x_0:x_1:x_2]\in\cP(2,3,4)(K)$ is defined to be
\begin{equation}
\Ht_{(2,3,4)}(x)\defeq\prod_{v\in \Val(K)} |(x_0,x_1,x_2)|_{(2,3,4),v}.
\end{equation}

\begin{theorem}\label{thm:main}
    Let $K$ be a global field of characteristic not equal to $2$ or $3$.
    When ordered by height, $100\%$ of elliptic curves defined over $K$ with an additional marked point have positive rank. 
\end{theorem}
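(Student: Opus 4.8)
The plan is to realize the theorem as a point-counting statement on the weighted projective stack $\cP(2,3,4)$ and to show that the locus of pairs whose marked point is torsion is negligible. Concretely, I would first fix the identification of $\cM_{1,2}$ with an open substack of $\cP(2,3,4)$ explicitly: a pair $(E,P)$ (with origin $O$ and marked point $P\neq O$) can be put, after translating so that $P$ has vanishing $x$-coordinate, in the form $E\colon y^2=x^3+a_2x^2+a_4x+c^2$ with $P=(0,c)$, so that $(a_2,c,a_4)$ are coordinates of weights $(2,3,4)$ and the smooth (nonzero discriminant) locus is $\cM_{1,2}$. The key elementary observation is that if $P$ has infinite order then $E(K)$ has positive rank; hence it suffices to prove that $100\%$ of the pairs $(E,P)$, ordered by $\Ht_{(2,3,4)}$, have marked point of infinite order.

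For the denominator, I would establish the lower bound $\#\{x\in\cM_{1,2}(K):\Ht_{(2,3,4)}(x)\le B\}\asymp B^{9}$. This is a geometry-of-numbers count: the bound $\Ht_{(2,3,4)}\le B$ confines the (weighted-primitive) coordinates to a box $|a_2|\ll B^2$, $|c|\ll B^3$, $|a_4|\ll B^4$, of volume $B^{2+3+4}=B^{9}$, while the discriminant hypersurface $\Delta(a_2,c,a_4)=0$ that one removes is a proper closed subset contributing only $O(B^{9-\delta})$ points. Over number fields this is a direct lattice-point count with the usual M\"obius sieve for weighted primitivity; over function fields one argues with the analogous count of effective divisors, possibly introducing a bounded power of $\log$ or of $q$, which does not affect the argument.

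For the numerator, I would bound the torsion locus. For each integer $n\ge 2$ the condition that $P=(0,c)$ satisfies $nP=O$ is a single weighted-homogeneous relation in $(a_2,c,a_4)$, cutting out a one-dimensional closed substack $C_n\subset\cP(2,3,4)$, essentially the image of the modular curve $X_1(n)$. Because $C_n$ is a curve, its points of bounded height are of strictly smaller order than the ambient surface: parametrizing its geometrically rational components by $\P^1$ and comparing heights (any non-constant morphism $\P^1\to\cP(2,3,4)$ has coordinate functions of degrees $(2k,3k,4k)$, so $\Ht_{(2,3,4)}\asymp T^{k}$ on the source) shows $\#\{x\in C_n(K):\Ht_{(2,3,4)}(x)\le B\}\ll_n B^{2}$, while components of positive genus contribute either finitely many points or only $O((\log B)^{O(1)})$ points; alternatively one may invoke a uniform dimension-growth bound for points on curves after a finite map to ordinary projective space. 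Finally, the uniform boundedness of torsion—Mazur's theorem over $\Q$, Merel's theorem over number fields, and the corresponding bounds for non-isotrivial curves over function fields (isotrivial families themselves forming a negligible, lower-dimensional locus)—guarantees that only finitely many orders $n$ occur, so the total torsion contribution is a finite sum of $o(B^{9})$ terms, hence $o(B^{9})$.

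Dividing, the proportion of pairs with torsion marked point is $o(B^{9})/(\gg B^{9})\to 0$, so $100\%$ of $(E,P)$ have a marked point of infinite order and therefore positive rank. I expect the main obstacle to be making the two counting inputs uniform across all global fields of characteristic $\neq 2,3$: pinning down the denominator bound $\asymp B^{9}$ on $\cP(2,3,4)$ (including the function-field case and the stacky weighted-primitivity bookkeeping), and controlling the torsion locus uniformly in $n$, where the function-field subtlety of isotrivial curves and the precise shape of uniform boundedness require the most care.
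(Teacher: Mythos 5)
Your overall skeleton (reduce to showing the marked point is non-torsion for $100\%$ of pairs, use uniform boundedness of torsion to get finitely many torsion curves, show each curve is negligible) matches the paper's, which gets all the counting in one stroke from the cited equidistribution theorem for open substacks of $\cP(2,3,4)$. The genuine gap in your version is the numerator bound $\#\{x\in C_n(K):\Ht_{(2,3,4)}(x)\le B\}\ll_n B^{2}$: this is false, and both of your proposed justifications fail for the same structural reason, namely that the torsion loci are \emph{stacky} curves, and the stacky height (with its floor functions) is not comparable to a classical Weil height on the coarse curve. Concretely, take $n=2$: in your model $\cY_1(2)=\{c=0\}$, i.e.\ curves $y^2=x^3+a_2x^2+a_4x$ with $P=(0,0)$. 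Over $\Q$ the points of height $\le B$ on this locus are, up to sign, the weighted-primitive pairs $(a_2,a_4)\in\Z^2$ with $|a_2|\le B^2$, $|a_4|\le B^4$, so this single torsion curve already contributes $\asymp B^{6}$ points, not $O(B^{2})$. The moduli-theoretic reason is that all quadratic twists $(a_2,a_4)\mapsto(da_2,d^2a_4)$ retain the rational $2$-torsion point and give \emph{distinct} $K$-points of the substack; these are not in the image of $\P^1(K)$ under any single morphism $\P^1\to\cP(2,3,4)$ with coordinate degrees $(2k,3k,4k)$ (e.g.\ $[s:t]\mapsto[s^2:0:t^4]$ only hits points where $a_2$ is a square up to weighted scaling), so your parametrization argument does not see them. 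Likewise $\cY_1(3)$ is an open substack of $\cP(1,3)$ via the Tate normal form $y^2+a_1xy+a_3y=x^3$, and contributes $\asymp B^{4}$. Your fallback via dimension growth fails for the same reason: the map from stack points to coarse-space points is infinite-to-one along the stacky locus (again, the twists), so bounds for rational points on the coarse curve in $\P^N$ do not bound points of the substack.

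The theorem itself is not endangered, since $B^{6}$, $B^{4}$, etc.\ are still $o(B^{9})$; but your argument must be replaced by one that genuinely counts stack points. Over $\Q$ there is an elementary repair: $C_n$ is cut out by a nonzero weighted-homogeneous equation $\psi_n(x_0,x_1,x_2)=0$, and slicing the box $|x_0|\le B^2$, $|x_1|\le B^3$, $|x_2|\le B^4$ along a variable that $\psi_n$ actually involves (fixing the other two, there are $O_n(1)$ roots) gives $\ll_n B^{7}=o(B^{9})$ uniformly. Over general number fields and function fields, carrying this out (units, class group, Riemann--Roch, and the stacky primitivity bookkeeping) is essentially the content of the counting theorems of Phillips and Darda that the paper invokes; that is precisely why the paper's proof consists of citing the equidistribution statement for arbitrary open substacks together with uniform boundedness of torsion, rather than arguing with classical heights on curves. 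Your denominator bound $\asymp B^{9}$ and your reduction to non-torsion marked points are fine; it is the treatment of the one-dimensional stacky loci where the proposal, as written, rests on a false lemma.
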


\begin{proof}
It follows from \cite[Theorem 4.1.1]{Phi24} and \cite[Theorem 8.3.2.2]{Dar21} (see also \cite[Theorem 4.05]{Phi25}) that for any open substack $X\subseteq \cP(2,3,4)$ we have the asymptotic
\begin{equation}\label{equation:equidistribution}
    \#\{x\in X(K) : \Ht_{(2,3,4)}(x)\leq B\}\sim \#\{x\in \cP(2,3,4)(K) : \Ht_{(2,3,4)}(x)\leq B\}.
\end{equation}
Let $\cY_1(N)$ be the modular curve parameterizing pairs $(E,P)$, where $E$ is an elliptic curve and $P$ is a point of order $N$. View $\cY_1(N)$ as a substack of $\cM_{1,2}$. 
Let $\cY_{tors}$ denote the union of all modular curves $\cY_1(N)$ with $\cY_1(N)(K)\neq \emptyset$. By uniform bounds for torsion of elliptic curves \cite{Lev68,Mer96}, $\cY_{tors}$ will be a finite union, and thus a closed substack of $\cM_{1,2}$. Applying the asymptotic (\ref{equation:equidistribution}) with $X=\cM_{1,2}$ and $X=\cM_{1,2}-\cY_{tors}$ proves the theorem.
\end{proof}

\bibliographystyle{amsalpha}
\bibliography{bibfile}

\providecommand{\bysame}{\leavevmode\hbox to3em{\hrulefill}\thinspace}
\providecommand{\MR}{\relax\ifhmode\unskip\space\fi MR }
\providecommand{\MRhref}[2]{%
  \href{http://www.ams.org/mathscinet-getitem?mr=#1}{#2}
}
\providecommand{\href}[2]{#2}
\begin{thebibliography}{Mer96}

\bibitem[BH22]{BH22+}
Manjul Bhargava and Wei Ho, \emph{On average sizes of selmer groups and ranks
  in families of elliptic curves having marked points}, 2022.

\bibitem[Dar21]{Dar21}
Ratko Darda, \emph{Rational points of bounded height on weighted projective
  stacks}, Ph.D. thesis, Universit\'{e} Paris Cit\'{e}, 2021.

\bibitem[Inc22]{Inc22}
Giovanni Inchiostro, \emph{Moduli of genus one curves with two marked points as
  a weighted blow-up}, Math. Z. \textbf{302} (2022), no.~3, 1905--1925.

\bibitem[Lev68]{Lev68}
Martin Levin, \emph{On the group of rational points on elliptic curves over
  function fields}, Amer. J. Math. \textbf{90} (1968), 456--462.

\bibitem[Mer96]{Mer96}
Lo\"ic Merel, \emph{Bornes pour la torsion des courbes elliptiques sur les
  corps de nombres}, Invent. Math. \textbf{124} (1996), no.~1-3, 437--449.

\bibitem[Phi24]{Phi24}
Tristan Phillips, \emph{Counting {D}rinfeld modules with prescribed local
  conditions}, Res. Number Theory \textbf{10} (2024), no.~1, Paper No. 21, 22.

\bibitem[Phi25]{Phi25}
\bysame, \emph{Average {A}nalytic {R}anks of {E}lliptic {C}urves over {N}umber
  {F}ields}, Forum Math. Sigma \textbf{13} (2025), Paper No. e40.

\end{thebibliography}

\end{document}